\def\blfootnote{\xdef\@thefnmark{}\@footnotetext}
\setlist{itemsep=3pt,parsep=0pt,topsep=2pt,partopsep=0pt}  
\newcommand{\oldqed}{}
\newcommand{\eofClaim}{\hfill\scalebox{.6}{$\Box$}}
\renewcommand{\le}{\leqslant}
\renewcommand{\ge}{\geqslant} 
\newtheorem{theorem}{Theorem}
\newtheorem{lemma}[theorem]{Lemma}
\newtheorem{corollary}[theorem]{Corollary}
\newtheorem{conjecture}[theorem]{Conjecture}
\newtheorem{question}[theorem]{Question}
\theoremstyle{definition}
\title{Graphs with arbitrary Ramsey number and connectivity}
\author{
Isabel Ahme\footnote{Mathematical Institute, University of Oxford,
Oxford OX2\thinspace6GG, United Kingdom.}~\footnote{Email: isabel.ahme@gmail.com}  \quad \;
Alex Scott\protect\footnotemark[1]~\footnote{Research supported by EPSRC grant EP/X013642/1.}}
\date{}
\begin{document}

\maketitle

\begin{abstract}
    The Ramsey number $r(G)$ of a graph $G$ is the minimum number $N$ such that any red-blue colouring of the edges of $K_N$ contains a monochromatic copy of $G$. Pavez-Signé, Piga and Sanhueza-Matamala proved that for any function $n\leq f(n) \leq r(K_n)$, there is a sequence of connected graphs $(G_n)_{n\in \mathbb{N}}$ with $|V(G_n)|=n$ such that $r(G_n)=\Theta(f(n))$ and conjectured that $G_n$ can additionally have arbitrarily large connectivity. In this note we prove their conjecture.
\end{abstract}

\section{Introduction}
The \emph{Ramsey number} $r(G)$ of a graph $G$ is the smallest natural number $N$ such that every red-blue colouring of the edges of $K_N$ results in a monochromatic copy of $G$. Since the early days of Ramsey theory, determining the growth rates of the Ramsey number of various graph families has been a large focus of the field. Classic results by Erd\H{o}s \cite{Erdos1947lowerbound} and Erd\H{o}s and Szekeres \cite{Erdos1935upperbound} imply that $\sqrt{2}^{n}\leq r(K_n)\leq 4^n$. Despite considerable attention, no improvement had been made to the constants $\sqrt{2}$ and 4, until recently Campos, Griffiths, Morris and Sahasrabudhe \cite{campos2023exponential} showed that there is an $\epsilon >0$ such that $r(K_n)\leq (4-\epsilon)^n$. Thus, the growth rate of $r(K_n)$ is exponential in $n$, but its exact behaviour remains unclear since the gap between the two bounds is large. On the other end of possible growth rates, there has been much interest in understanding which graphs have Ramsey number linear in their number of vertices. Burr and Erd\H{o}s \cite{Burr1973conjecture} conjectured that this is true for all graphs of bounded degeneracy, and this was proved by Lee \cite{Lee2017degeneracy}. 

It is easy to see that the Ramsey number of any graph on $n$ vertices lies between  $n$ and $r(K_n)$.  Pavez-Signé, Piga and Sanhueza-Matamala \cite{pavez2022growthrates} raised the question of what intermediate growth rates can be achieved. They prove that for every function $f$ such that $n\leq f(n)\leq r(K_n)$ for all $n$, there is a sequence of connected graphs such that the Ramsey number satisfies this growth rate to within a multiplicative factor.

\begin{theorem}[\cite{pavez2022growthrates}, Theorem 1]
\label{thm:pavez}
    For every $k\geq 2$, there exists a constant $C>0$ such that, for every function $f\colon \mathbb{N}\rightarrow \mathbb{N}$ with $n\leq f(n)\leq r(K_n)$, there is a sequence $(G_n)_{n\in \mathbb{N}}$ of connected graphs  such that $|V(G_n)|=n$ and $f(n) \leq r(G_n) \leq Cf(n)$ for all sufficiently large $n$. 
\end{theorem}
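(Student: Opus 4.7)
The plan is to mimic the construction behind Theorem~\ref{thm:pavez}, in which $G_n$ is built in two pieces: a dense piece whose Ramsey number drives $r(G_n)$, and a sparse piece that provides the remaining vertices without inflating the Ramsey number much. I will keep the dense piece---a clique $K_t$ with $t=t(n)$ chosen so that $r(K_t)$ is comparable to $f(n)$, as in the proof of Theorem~\ref{thm:pavez}---and replace the sparse piece by a $k$-connected analogue. Concretely, I will take the sparse piece to be $P_\ell^k$, the $k$-th power of a path on $\ell=n-t+k$ vertices: it is $k$-connected and is known to have Ramsey number linear in $\ell$, with constants depending on $k$ (e.g.\ by bandwidth-based Ramsey results). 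Instead of attaching the sparse piece at a single vertex (which would only give connectivity $1$), I glue it to the clique along a $K_k$, identifying the first $k$ vertices of $P_\ell^k$ (which form a clique there) with some $K_k$ inside $K_t$. The resulting graph $H_n := K_t \cup_{K_k} P_\ell^k$ has $n$ vertices and is $k$-connected, since both pieces are at least $k$-connected and they are glued along a $K_k$; indeed, after removing fewer than $k$ vertices, at least one vertex of the glued $K_k$ survives, and both sides of the gluing remain connected to it.

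The lower bound $r(H_n) \geq f(n)$ is immediate, as $K_t \subseteq H_n$ and $t(n)$ is chosen so that $r(K_t) \geq f(n)$. For the upper bound $r(H_n) \leq Cf(n)$, my strategy is: in any $2$-colouring of $K_N$ with $N=Cf(n)$, first locate a monochromatic (say red) $K_t$ in which some $k$-subset of vertices has a large common red neighbourhood---of size at least $C'(n-t+k)$, large enough to accommodate a red $P_\ell^k$---and then greedily embed a red copy of $P_\ell^k$ rooted at this $K_k$, extending the partial embedding one vertex at a time by choosing a new vertex red-adjacent to the previous $k$ chosen vertices.

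The main obstacle will be the first sub-step: producing a monochromatic $K_t$ with the required common-neighbourhood property. This is a book-Ramsey style refinement of the extension argument used to prove Theorem~\ref{thm:pavez}, which handled the single-vertex analogue of the sparse attachment. I plan to establish it by an iterative majority-degree argument that alternates between finding a monochromatic clique and restricting to the common monochromatic neighbourhood of a chosen $k$-subset, losing only a factor depending on $k$ at each step. Once the common neighbourhood has the required linear size, the greedy embedding of $P_\ell^k$ in it is routine using the linear Ramsey-type extension bounds for bandwidth-$k$ graphs.
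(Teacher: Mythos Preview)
First a bookkeeping point: Theorem~\ref{thm:pavez} as stated here is quoted from Pavez-Sign\'e, Piga and Sanhueza-Matamala and is not proved in this paper; what the paper actually proves is Theorem~\ref{thm:arbitraryconnectivity}, the $k$-connected strengthening. Your proposal is visibly aimed at that strengthening (you construct $k$-connected graphs), so I compare it with the proof of Theorem~\ref{thm:arbitraryconnectivity}.

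\textbf{The $P_\ell^k$ sparse part is an unnecessary complication, and its rooted embedding is not routine.} The paper attaches $n-t$ independent vertices to a fixed $K_k$ inside the dense piece; this already yields a $k$-connected graph and is strictly easier to embed than your $P_\ell^k$. Your greedy extension step has a genuine gap: once you pass to the common red neighbourhood $W$ of your chosen $k$ spine vertices, you have no control over red degrees \emph{inside} $W$. If red is sparse in $W$, then already the second new vertex of the path-power (which must be red-adjacent to the first new vertex, not just to the spine) may fail to exist. Invoking the linear Ramsey number of $P_\ell^k$ does not help: that only yields an \emph{unrooted} monochromatic copy, possibly in blue, which cannot be glued to your red $K_t$ along the required $K_k$. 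This unbalanced-colour obstruction is exactly what forces the paper to bring in the Erd\H{o}s--Szemer\'edi lemma and the auxiliary digraph argument, rather than a one-line greedy step.

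\textbf{A single clique-based construction does not cover small $f(n)$.} Your $H_n$ contains the paper's Case~2 graph ($K_t$ plus pendants on a $K_k$) as a subgraph, so bounding $r(H_n)$ from above is at least as hard. The paper's Case~2 upper bound (majority argument to locate $k$ vertices of each colour, then the unbalanced-colouring machinery) only goes through when $f(n)\gtrsim (\log n)^{k+2}n$; for $n\le f(n)\le (\log n)^{k+2}n$ the paper switches to a \emph{bipartite} dense piece $K_{t,t}$ with a completely different counting argument (forbidding a blue $K_{t,t}$ forces many red edges out of a small set). Your sketch gives no mechanism for this regime, and the book-type statement you would need there---a monochromatic $K_t$ with $t\approx\log n$ carrying a $K_k$ with $\Theta(n)$ same-colour common neighbours, all inside $K_{Cn}$---does not fall out of an iterated majority argument with losses depending only on $k$.
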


Note that the problem is much simpler if the graphs are not required to be connected: 
taking an $K_r$ and adding $n-r$ isolated vertices gives an $n$-vertex graph with Ramsey number $\max \{n, r(K_r)\}$, which lies within a constant factor of $f(n)$ if $r$ is chosen correctly.
The examples constructed by Pavez-Signé, Piga and Sanhueza-Matamala are connected but not 2-connected, consisting of a path of suitable length attached at one end to a clique (or more generally a complete multipartite graph). Pavez-Signé, Piga and Sanhueza-Matamala conjectured that it should be possible to get a sequence of graphs with any growth rate and connectivity.

\begin{conjecture}[\cite{pavez2022growthrates}, Conjecture 16]
    For every $k\geq 2$ and every function $f \colon \mathbb{N} \rightarrow \mathbb{N}$ such that $n\leq f(n) \leq r(K_n)$ for all $n$, there is a sequence $(G_n)_{n\in \mathbb{N}}$ of $k$-connected graphs  such that $|V(G_n)|=n$ and $r(G_n)=\Theta(f_n)$. 
\end{conjecture}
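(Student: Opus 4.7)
The plan is to adapt the Pavez-Signé--Piga--Sanhueza-Matamala construction underlying Theorem~\ref{thm:pavez} by replacing the pendant path with the $k$-th power of a path attached along a $k$-clique, thereby raising the connectivity from $1$ to $k$ while preserving the order of the Ramsey number. Given $k$, $f$ and $n$, let $H$ be the ``head'' graph used in \cite{pavez2022growthrates}: a complete multipartite graph on $r$ vertices with $r(H) = \Theta(f(n))$, chosen so that $H$ is $k$-connected and contains a copy of $K_k$ (both easily arranged, e.g.\ by taking at least $k+1$ parts, and dealing separately with the degenerate case $f(n) = \Theta(n)$ by taking $H$ to be a small fixed clique). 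Let $P := P_{n-r+k}^k$ denote the $k$-th power of a path on $n-r+k$ vertices; the first $k$ vertices of the underlying path form a clique in $P$. Define $G_n$ by identifying this clique with a copy of $K_k$ in $H$, so that $|V(G_n)| = n$.

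For the $k$-connectivity of $G_n$: both $H$ and $P$ are $k$-connected (the latter since $P_m^k$ is $k$-connected whenever $m \ge k+1$), and they share exactly $k$ vertices, call the set $U$. For any $S \subseteq V(G_n)$ with $|S| < k$ there is some $u \in U \setminus S$; both $H - S$ and $P - S$ are connected and contain $u$, so $G_n - S$ is connected.

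For the Ramsey number, the lower bound $r(G_n) \ge \max(n, r(H)) = \Omega(f(n))$ is immediate from $H \subseteq G_n$ together with the connectivity of $G_n$. For the upper bound, in any two-colouring of $K_N$ with $N = Cf(n)$ and $C$ large, we follow the strategy of \cite{pavez2022growthrates}: first apply $r(H) \le C' f(n)$ to find a monochromatic (say red) copy of $H$, and then extend into the red neighbourhood to embed the remaining $n-r$ vertices of $P_{n-r+k}^k$, anchored on the $k$-vertex interface $U$. Since $P_m^k$ has maximum degree $2k$, its Ramsey number is $O(m)$ by Chv\'atal--R\"odl--Szemer\'edi--Trotter, and the extension lemma of Pavez-Signé--Piga--Sanhueza-Matamala adapts with ``pendant path'' replaced by ``pendant $k$-th power of a path anchored on a $k$-clique'', at the cost of a constant factor that can be absorbed into $C$.

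The main obstacle is the quantitative extension step, particularly in the regime where $f(n)$ is close to $r(K_n)$ so that $|V(H)|$ is a large fraction of $n$. There one must verify that after extracting a large monochromatic copy of the dense head $H$, the ambient colouring still contains enough same-colour density around $U$ to greedily embed the bounded-degree remainder of $G_n$. Since the interface has only $k$ vertices and the rest of $G_n$ has maximum degree $2k$, we expect only the hidden constant $C$ to shift relative to \cite{pavez2022growthrates}; making this precise uniformly across every growth rate from $n$ to $r(K_n)$ is where the technical heart of the proof lies.
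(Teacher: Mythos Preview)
Your construction and the lower bound are fine, but the upper bound has a real gap at exactly the point you flag. After locating a red copy of $H$, nothing prevents the chosen $k$-clique $U\subset H$ from having tiny common red neighbourhood, so the greedy embedding of $P^k$ anchored at $U$ can fail at the very first step. Chv\'atal--R\"odl--Szemer\'edi--Trotter only furnishes an \emph{un}anchored monochromatic $P^k$ somewhere in the colouring; anchoring it at a prescribed $k$-set is a different statement, and ``the extension lemma of \cite{pavez2022growthrates} adapts'' is precisely the assertion that needs proof. For $k=1$ the tail is a path and a single majority-neighbourhood step suffices to move forward; for $k\ge2$ one must control the common red neighbourhood of a sliding window of $k$ vertices, and this does not follow from the $k=1$ argument by a constant-factor adjustment.

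The paper avoids this by changing both the construction and the order of embedding. The tail is not a $P^k$ glued along a $K_k$ but simply an independent set completely joined to $k$ designated vertices of the head (a $K_{t,t}$ in the low range of $f$, a $K_t$ in the high range). For the upper bound one first secures the $k$ connectors: iteratively pass to majority-colour neighbourhoods until $k$ vertices of one colour (say blue) are found, then continue greedily in the other colour at a fixed density threshold. If one also obtains $k$ red connectors, the connectors themselves form a red $K_k$ and are red-joined to everything that remains, so a monochromatic head inside the common neighbourhood finishes the embedding immediately. The substantive work is the complementary case, where the surviving set is $\varepsilon$-sparse in red; the paper then exploits this sparsity via the Erd\H os--Szemer\'edi unbalanced-colouring lemma together with an auxiliary-digraph argument (for the $K_t$ head), or via a direct count of blue $K_{1,t}$ stars (for the $K_{t,t}$ head), to force the structure in blue. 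This ``connectors first, then head, then handle the sparse colour'' strategy is the missing idea in your sketch.
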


In this paper, we prove their conjecture.

\begin{theorem}
\label{thm:arbitraryconnectivity}
    For every $k\geq 2$, there exists $C_k>0$ such that the following holds. For every function $f: \mathbb{N} \rightarrow\mathbb{N}$ such that $n\leq f(n)\leq r(K_n)$ for all $n$, there is a sequence $(G_n)_{n\in \mathbb{N}}$ of $k$-connected graphs such that $|V(G_n)|=n$ and $f(n)\leq r(G_n) \leq C_k(f_n)$ for all $n$.
\end{theorem}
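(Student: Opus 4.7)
The plan is to adapt the construction of Pavez-Sign\'e, Piga and Sanhueza-Matamala (henceforth PSP) from Theorem~\ref{thm:pavez} by replacing the path tail with a $k$-connected spine. As in PSP, for each $n$ we select a small $k$-connected graph $H$ on $h \leq n$ vertices with $r(H) = \Theta(f(n))$; such $H$ exist as complete multipartite graphs with all parts of size at least $k$, since the Ramsey numbers of small complete multipartite graphs interpolate densely across $[n, r(K_n)]$. Define $G_n$ on $n$ vertices by setting $V(G_n) = V(H) \cup \{v_1, \dots, v_{n-h}\}$, adding all edges of $H$, and joining each $v_i$ to $v_{i-1}, v_{i-2}, \dots, v_{i-k}$, where for $j \leq 0$ we identify $v_j$ with a designated vertex $h_{|j|+1}$ of $H$. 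Taking the $h_j$'s to lie in a $k$-clique of $H$ (which exists since $H$ has at least $k$ parts), the spine $h_k, h_{k-1}, \dots, h_1, v_1, v_2, \dots, v_{n-h}$ of $G_n$ spans the $k$-th power of a path, which is $k$-connected; combined with the $k$-connectivity of $H$, this makes $G_n$ itself $k$-connected.

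The lower bound $r(G_n) \geq r(H) \geq f(n)$ is immediate from $H \subseteq G_n$. The upper bound $r(G_n) \leq C_k f(n)$ is the main content. I would adapt PSP's proof, which, given any $2$-colouring of $K_N$ with $N = \Theta(\max(n, r(H)))$, locates a monochromatic copy of $H$ together with a large common monochromatic neighbourhood, and then embeds a monochromatic path of length $n-h$ inside this neighbourhood (in the same colour) using Erd\H{o}s-Gallai. Our required modification is to embed, instead of a path, the $k$-th power of a path $P_n^k$. This embedding is possible because $P_n^k$ has bounded bandwidth and is $k$-degenerate, so its Ramsey number is $O_k(n)$ by Lee's theorem, and $P_n^k$ can be embedded greedily into any sufficiently dense monochromatic subgraph via the standard ``$k$-common-neighbour'' argument for embedding $k$-degenerate graphs.

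The main obstacle is colour-compatibility between $H$ and the tail: the embedded $P_n^k$ must be the same colour as the embedded $H$. This is handled by the same dichotomy used by PSP: if the red subgraph on the common neighbourhood of the red $H$ is too sparse to host a red $P_n^k$, then the blue subgraph is dense enough that we can instead find a blue copy of $H$ in the neighbourhood (using its size $\geq r(H)$) and embed a blue $P_n^k$ attached to it. Controlling constants is the delicate point, especially for $f(n)$ close to $n$ where $h$ can be large: a naive Erd\H{o}s-Szekeres extraction of the common monochromatic neighbourhood loses a factor exponential in $h$, which is fatal. PSP's refined extraction, which exploits the bounded-bandwidth structure of the tail, should transfer to our setting essentially verbatim, with Erd\H{o}s-Gallai replaced by greedy embedding of $P_n^k$ and with ``path'' replaced by ``$k$-th power of a path'' throughout.
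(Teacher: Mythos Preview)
Your construction is sound --- gluing $H$ to the $k$th power of a path along a $k$-clique does give a $k$-connected $n$-vertex graph, and $r(G_n)\ge r(H)$ is immediate. But the upper bound is where all the content lies, and here you do not give a proof: you defer to PSP with the remark that their argument ``should transfer essentially verbatim''. It does not transfer so easily. PSP attach a path at a \emph{single} vertex of $H$; you attach $P^k$ along a $k$-clique, and two genuine new difficulties appear. First, after locating a monochromatic $H$ you need a $k$-clique inside it whose common same-colour neighbourhood is large, whereas PSP need only a single vertex with a large same-colour neighbourhood and can afford to try all $h$ candidates. Second, and more seriously, embedding the tail: Erd\H os--Gallai produces a monochromatic spanning path as soon as one colour has density bounded away from~$0$, but greedy (or Koml\'os--S\'ark\"ozy--Szemer\'edi) embedding of $P_m^k$ needs minimum degree roughly $(1-\tfrac1k)|U|$ in that colour. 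Your dichotomy therefore does not close: ``red too sparse to host a red $P_n^k$'' is a local obstruction (some $k$ consecutive embedded vertices lack a common unused red neighbour) and gives essentially no global information --- it neither forces blue to be dense enough for a blue $P_n^k$, nor forces the monochromatic $H$ you next find in $U$ to be blue. Some further structural input is required, and the proposal does not supply it. (A smaller issue: complete multipartite graphs with all parts of size at least $k$ are not known to have Ramsey numbers reaching up to $\Theta(r(K_n))$; you should also allow $H=K_t$, which is still $k$-connected and contains $k$-cliques.)

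For comparison, the paper sidesteps both difficulties by using a much simpler tail: the extra $n-t$ vertices form an independent set, each joined to the \emph{same} fixed $k$ vertices of the core, where the core is $K_{t,t}$ when $f(n)\le 2^{n/8}$ and $K_t$ when $f(n)\ge (\log n)^{k+2}n$. The upper bound then only asks for a monochromatic core together with $k$ external vertices monochromatic into a residual set of size $n$ --- there is no tail to embed. An Erd\H os--Szekeres descent first extracts a blue-dominant $k$-set $B$ and, if possible, a red-dominant $k$-set $R$; if both succeed, any monochromatic core in the residual set finishes the embedding at once. If $R$ cannot be completed, the residual set has tiny red degrees, and the proof is finished by a star-counting trick (in the $K_{t,t}$ case) or by combining the Erd\H os--Szemer\'edi unbalanced-colouring lemma with an auxiliary-digraph colouring argument (in the $K_t$ case).
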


\section{Proof of Theorem \ref{thm:arbitraryconnectivity}}

Throughout this section all logarithms are base 2. We first give some tools required for the proof. 

A graph has \emph{density} $d$ if it has $d{n \choose 2}$ edges.
A lemma of Conlon, Fox and Sudakov \cite{CFS2020concentration} is helpful for controlling the growth of Ramsey numbers for dense graphs.  

\begin{lemma} [\cite{CFS2020concentration}, Lemma 5.5]
\label{lm:vertexdeletion}
    For every $d\in(0,1)$ there is $c>0$ such that for any graph $G$ of density at least $d$ and any vertex $v\in V(G)$,
    \begin{equation*}
        r(G) \leq  \frac{c}{d}\log({1}/{d}) \cdot r(G-v).
    \end{equation*}
\end{lemma}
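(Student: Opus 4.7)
The plan is to prove the lemma via a dependent-random-choice (DRC) argument tailored to two-colourings. Let $t = d_G(v)$, $G' = G-v$, and set $N = \frac{c}{d}\log(1/d)\cdot r(G')$ for a suitable constant $c = c(d)$. Fix an arbitrary red-blue colouring of $E(K_N)$; the goal is to locate a monochromatic copy of $G$.

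The first step is to isolate a large ``uniform'' vertex class. I would sample $T = \lceil \log(1/d)\rceil + O(1)$ vertices $x_1,\ldots,x_T \in V(K_N)$ uniformly at random, and assign to each remaining vertex $y$ its \emph{colour-type} $\chi(y)\in\{R,B\}^T$, the vector recording the colours of the edges $yx_1,\ldots,yx_T$. There are only $2^T$ possible colour-types, so pigeonhole produces a class $U\subseteq V(K_N)$ of some common type $\chi^*$ with $|U|\geq N/2^T$. Choosing $c$ so that $2^T \leq (1/d)\log(1/d)$ yields $|U|\geq r(G')$. By construction, each $x_i$ is $\chi^*_i$-adjacent to every vertex of $U$.

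Next, since $|U|\geq r(G')$, the induced colouring on $U$ contains a monochromatic copy of $G'$ of some colour $C$; let $w_1,\ldots,w_t \in U$ be the images of the neighbours of $v$ in $G$. If some coordinate $i$ satisfies $\chi^*_i = C$, then $x_i$ is $C$-adjacent to each of $w_1,\ldots,w_t$, and so extends the copy of $G'$ to a monochromatic copy of $G$ of colour $C$. This handles the ``typical'' case.

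The main obstacle lies in the degenerate case where the pigeonhole type $\chi^*$ happens to be constantly equal to the opposite colour $\overline{C}$. My plan to handle this is to strengthen the sampling step so that such an unbalanced $\chi^*$ is excluded: either bias the sample toward the majority colour of the global colouring (so that the most-popular $\chi^*$ inherits that bias and must contain both colours), or perform a two-sided DRC that imposes common-neighbourhood properties in \emph{both} colours simultaneously, at the cost of slightly enlarging $T$. The $\log(1/d)$ slack in the target bound for $N$ is precisely what accommodates this: it allows $|U|$ to exceed $r(G')$ by a logarithmic factor, leaving room to impose the extra conditions without breaking the budget. The characteristic factor $(1/d)\log(1/d)$ thus arises from the combination of the pigeonhole loss $2^T = \Theta(1/d)$ and the logarithmic slack required to rule out the ``wrong-colour'' case.
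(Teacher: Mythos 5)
There is a genuine gap, and it sits exactly where you placed your ``degenerate case'': that case is not degenerate, it is the entire content of the lemma, and neither of your proposed repairs can close it. (Note also that the randomness of $x_1,\dots,x_T$ plays no role in your argument; you only use pigeonhole over the $2^T$ colour-types.) Concretely, let $G$ be $K_n$ minus a perfect matching (density at least $d$ once $n$ is large), let $v$ be any vertex, and colour $K_N$ by partitioning its vertices into $m=n/3$ parts, with edges inside parts blue and edges between parts red. The red graph is $m$-partite while $\chi(G-v)\ge n/2-1>m$, so \emph{every} monochromatic copy of $G-v$ in this colouring is blue; on the other hand every vertex has blue degree less than $N/m=O\bigl(r(G-v)\log(1/d)/(dn)\bigr)$, which is far smaller than $r(G-v)$ when $n$ is large. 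So no choice of sample vertices --- biased towards the majority colour, two-sided, or otherwise --- can produce a vertex blue-complete to a set of size $r(G-v)$: demanding even one blue coordinate collapses the type class by a factor of order $n$, whereas your budget $2^T\le\frac1d\log(1/d)$ is a constant depending only on $d$, so the ``$\log(1/d)$ slack'' cannot absorb the loss. (The lemma still holds for this colouring, but the monochromatic $G$ lives inside one of the huge blue cliques, not via a vertex complete to a set of size $r(G-v)$.)

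The missing ingredient is precisely where the density hypothesis does real work; in your write-up the density is only used to calibrate $2^T$, which is a warning sign. Since $G$ has density at least $d$, a first-moment bound gives $e(G-v)\ge d\binom{n}{2}-(n-1)$ and hence $r(G-v)\ge 2^{dn/2-2}$, i.e.\ $n=O(\log r(G-v)/d)=O(\log N/d)$. Therefore, if the colouring of $K_N$ fails to be $\epsilon$-balanced for a suitable $\epsilon\asymp d/\log(1/d)$, Lemma~\ref{lemma:unbalancedcolourings} already yields a monochromatic clique of order $\frac{a}{\epsilon\log(1/\epsilon)}\log N\ge n$, which contains $G$ outright --- this is how colourings like the one above must be handled. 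In the remaining, $\epsilon$-balanced case one can afford to pass into a coloured neighbourhood of relative size $\epsilon$ and look for the copy of $G-v$ there; the factor $\frac{c}{d}\log(1/d)$ in the statement is essentially $1/\epsilon$ for this choice of $\epsilon$, not a pigeonhole loss $2^T=\Theta(1/d)$. This unbalanced/balanced dichotomy (as in the cited Conlon--Fox--Sudakov argument, and echoed in Case 2 of the present paper) is what your proposal needs; and even in the balanced case some care is required to couple the colour in which the extension vertex is complete with the colour of the copy of $G-v$ you find, which is exactly the coupling your current scheme does not control.
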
 

As noted in \cite{CFS2020concentration}, \cite{pavez2022growthrates}, the following is an immediate consequence.

\begin{corollary}
\label{lemma:completegraphsbound}
    There exist $c_1,c_2>0$ so that for any $n\geq 1$
    \begin{enumerate}
        \item $r(K_{t})\leq c_1r(K_{t-1})$,
        \item $r(K_{t,t})\leq c_2r(K_{t-1,t-1})$.
    \end{enumerate}
\end{corollary}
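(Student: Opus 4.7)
The plan is to apply Lemma \ref{lm:vertexdeletion} directly, exploiting the fact that $K_t$ and $K_{t,t}$ have density bounded away from $0$ uniformly in $t$, and that vertex deletion takes such graphs to slightly smaller graphs of the same type (or close to it).

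For part (1), I would observe that $K_t$ has density exactly $1$, so in particular density at least $d := 1/2$, and that $K_t - v = K_{t-1}$ for any vertex $v$. Lemma \ref{lm:vertexdeletion} applied with this fixed $d$ then yields a universal constant giving $r(K_t) \leq c_1 \, r(K_{t-1})$ for all $t \geq 2$, where $c_1$ can be taken to be $2c\log 2$ with $c$ the constant supplied by the lemma. Small cases (e.g.\ $t=1$) can be absorbed by enlarging $c_1$ if necessary.

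For part (2), the mild complication is that deleting one vertex from $K_{t,t}$ gives $K_{t-1,t}$, which is not of the form $K_{s,s}$. My plan is therefore to apply Lemma \ref{lm:vertexdeletion} twice: once to pass from $K_{t,t}$ to $K_{t-1,t}$ by deleting a vertex on one side, and once to pass from $K_{t-1,t}$ to $K_{t-1,t-1}$ by deleting a vertex on the other side. A quick density computation gives
\[
    \frac{e(K_{t,t})}{\binom{2t}{2}} \;=\; \frac{t}{2t-1} \;\geq\; \tfrac{1}{2}, \qquad \frac{e(K_{t-1,t})}{\binom{2t-1}{2}} \;=\; \frac{t}{2t-1} \;\geq\; \tfrac{1}{2},
\]
so both graphs have density at least $d := 1/3$. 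Applying the lemma with this choice of $d$ in each step yields a constant $C$ with $r(K_{t,t}) \leq C \, r(K_{t-1,t}) \leq C^2 \, r(K_{t-1,t-1})$, and we set $c_2 := C^2$.

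There is no genuine obstacle here; the statement is essentially a direct unpacking of Lemma \ref{lm:vertexdeletion}, with the only subtlety being the need for a two-step deletion argument in the bipartite case to return to a balanced bipartite graph. The verification of the density bounds and the handling of trivial small values of $t$ are entirely routine.
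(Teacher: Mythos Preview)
Your proposal is correct and matches the paper's approach: the paper simply states the corollary as an ``immediate consequence'' of Lemma~\ref{lm:vertexdeletion} without further argument, and your two-step deletion for the bipartite case is exactly the routine unpacking one would expect.
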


We will also need lower bounds on $r(K_t)$ and $r(K_{t,t})$. As mentioned before, we have $r(K_t)\geq 2^{t/2}$ \cite{Erdos1947lowerbound}. A similar probabilistic approach leads to the following well-known lower bound on $r(K_{t,t})$.

\begin{lemma}
\label{lm:compbiplowerbound}
    For $t\geq 1$, $r(K_{t,t})\geq 2^{t/2}$.
\end{lemma}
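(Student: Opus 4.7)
The proof would mimic the classical Erd\H{o}s probabilistic lower bound for $r(K_t)$, adapted to bipartite cliques. Fix $N = \lceil 2^{t/2} \rceil - 1$ and colour each edge of $K_N$ independently red or blue, each with probability $\tfrac{1}{2}$. The strategy is a first-moment argument: show that the expected number of monochromatic copies of $K_{t,t}$ is strictly less than $1$, so some colouring has no monochromatic $K_{t,t}$, whence $r(K_{t,t}) > N$ and therefore $r(K_{t,t}) \ge 2^{t/2}$.

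The key computation is routine. The number of (unordered) copies of $K_{t,t}$ inside $K_N$ is $\tfrac{1}{2}\binom{N}{t}\binom{N-t}{t}$, since we choose two disjoint $t$-sets and then quotient by the ordering of the parts; each fixed copy has $t^2$ edges and is monochromatic with probability $2 \cdot 2^{-t^2}$. Combining these and crudely using $\binom{N}{t}\binom{N-t}{t} \le N^{2t}/(t!)^2$, the expected number of monochromatic copies is at most $N^{2t}/((t!)^2 \, 2^{t^2})$. Substituting $N < 2^{t/2}$ gives $N^{2t} < 2^{t^2}$, so the expectation is bounded by $1/(t!)^2 < 1$ for $t \ge 2$.

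The small case $t = 1$ is handled by hand: $r(K_{1,1}) = r(K_2) = 2 > \sqrt{2}$. There is essentially no obstacle in this argument; the only small point to watch is the factor of $2$ coming from the unordered bipartition of $K_{t,t}$ versus the factor of $2$ from the two monochromatic colours, but the bound $N^{2t} < 2^{t^2}$ is loose enough that either bookkeeping convention leads to the same conclusion.
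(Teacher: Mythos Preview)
Your proposal is correct and matches the paper's approach exactly: the paper does not spell out a proof but simply remarks that ``a similar probabilistic approach'' to Erd\H{o}s's lower bound for $r(K_t)$ yields the stated inequality, and your first-moment argument is precisely that standard computation.
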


The next result we require concerns unbalanced colourings. We say that a colouring is $\epsilon$\emph{-balanced} if both colour classes have edge density at least $\epsilon$. Erd\H{o}s and Szemerédi \cite{Erdosszemeredi1972unbalanced} showed that if we know that a colouring is unbalanced then we can guarantee larger monochromatic cliques. 
 
\begin{lemma}[\cite{Erdosszemeredi1972unbalanced}, Theorem 2]
\label{lemma:unbalancedcolourings}
    There exists an absolute constant $a>0$ such that for all $\epsilon \in (0,\frac{1}{2}]$ and any positive integer $N$ the following holds. Any two-colouring of $K_N$ which is not $\epsilon$-balanced contains a monochromatic clique of order $\frac{a}{\epsilon \log \left({1}/{\epsilon}\right)}\log N$.
\end{lemma}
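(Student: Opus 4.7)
The plan is to prove this classical theorem of Erd\H{o}s and Szemer\'edi by greedily extracting a monochromatic clique while tracking the minority-colour density in a shrinking vertex set. Without loss of generality the red colour class has density $\rho\le\epsilon$, so the blue graph is dense; set the target $m=\frac{a\log N}{\epsilon\log(1/\epsilon)}$ with $a>0$ a small absolute constant.

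The basic tool is an averaging step: in any two-colouring of $K_n$ with red density at most $\rho'$, some vertex $v$ has red degree at most $\rho'(n-1)$, and passing to its blue neighbourhood yields a subset of size at least $(1-\rho')(n-1)$ in which the red density is at most $\rho'/(1-\rho')^2$. Iterating this step $T$ times produces a blue clique of size $T$ and a surviving set $V_T$ of size roughly $(1-\rho)^T N$, with red density at most $\rho\,(1-\rho)^{-2T}$. This converts "imbalance" into "clique size" at a quantified rate.

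I would then run the argument in stages indexed by $j=0,1,\dots,J$ with $J\approx \log_2(1/\epsilon)$: stage $j$ starts with density at most $2^{j}\epsilon$ and applies the averaging step $T_j=\Theta(1/(2^{j}\epsilon))$ times, doubling the density to $2^{j+1}\epsilon$ while contributing $T_j$ vertices to a blue clique. After stage $J$ the density has reached $\Theta(1)$ and the surviving set still has size $N^{\Omega(1)}$; at this point one applies the classical Erd\H{o}s--Szekeres bound $r(K_\ell)\le 4^\ell$ to extract a further monochromatic clique of size $\Omega(\log N)$, which is either red (keep it as is) or blue (append it to the blue clique already accumulated). The proof is driven by an induction on $\log N$ of the form
\begin{equation*}
  f(\epsilon,N)\ge T(\epsilon)+f\bigl(2\epsilon,\,(1-\epsilon)^{T(\epsilon)}N\bigr),
\end{equation*}
where $f(\epsilon,N)$ denotes the monochromatic clique size guaranteed by the lemma.

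The main obstacle is recovering the sharp $1/(\epsilon\log(1/\epsilon))$ dependence rather than the weaker $1/\epsilon$ produced by a careless iteration. With the naive choice $T(\epsilon)=\Theta(1/\epsilon)$ and $\delta(\epsilon)=2\epsilon$, a geometric-series calculation only yields $f(\epsilon,N)=O(\log N+1/\epsilon)$, which misses the target by a $\log(1/\epsilon)$ factor in the low-density regime. Obtaining the correct bound requires synchronising the two "clocks" $\log N$ and $\log(1/\epsilon)$: the stage lengths $T_j$ must be chosen so that the $\log N$ spent on shrinking the universe across all $\log(1/\epsilon)$ stages is comparable to the $\log N$ harvested from the final Erd\H{o}s--Szekeres application, which is precisely the delicate accounting performed in Erd\H{o}s and Szemer\'edi's original paper. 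Verifying this balance and optimising the constant $a$ is the technical heart of the proof.
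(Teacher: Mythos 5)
This lemma is not proved in the paper at all: it is quoted as a black box from Erd\H{o}s and Szemer\'edi, so there is no internal argument to compare against. Judged as a proof attempt, your sketch has a genuine gap, and you essentially concede it yourself. Your greedy phase with dyadic density thresholds contributes at most $\sum_j T_j = O(1/\epsilon)$ blue vertices (each step at density bound $d$ multiplies that bound by roughly $1/(1-d)^2$, so only $O(1/d)$ steps fit in each dyadic level, and the sum telescopes to $O(1/\epsilon)$), and your single terminal Erd\H{o}s--Szekeres application contributes at most $O(\log N)$ more. So the scheme is structurally capped at $O(1/\epsilon + \log N)$, which falls short of the required $\frac{a\log N}{\epsilon\log(1/\epsilon)}$ by an unbounded factor (e.g.\ take $\epsilon$ a small constant and $N\to\infty$, or $\epsilon = 1/\log N$). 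This is not a matter of ``synchronising clocks'' or optimising $a$ inside your framework: no rescheduling of the stage lengths $T_j$ changes either cap, so the missing quantitative strength has to come from a different mechanism, and deferring that mechanism to ``the delicate accounting performed in Erd\H{o}s and Szemer\'edi's original paper'' is precisely the step a proof must supply.

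The red side of the dichotomy makes the gap concrete. Take the red graph to be a disjoint union of $1/\epsilon$ cliques, each on $\epsilon N$ vertices: the red density is about $\epsilon$, the largest blue clique has size only $1/\epsilon$, and the conclusion of the lemma can only be met by the red cliques of size $\epsilon N \gg \log N$. Your argument run on this colouring certifies a blue clique of size about $1/\epsilon$ plus, in the red case of the final step, only the $\Omega(\log N)$ clique that Erd\H{o}s--Szekeres yields --- short of the target by the factor $\frac{1}{\epsilon\log(1/\epsilon)}$. The actual proof has to let the sparsity enter the Ramsey recursion itself (red steps must be rare or cheap, leading to the entropy-type count $N \lesssim 2^{O(k\epsilon\log(1/\epsilon))}$ when there is no monochromatic $K_k$), or equivalently must iterate the sparse extraction $\Theta\bigl(\frac{\log N}{\log(1/\epsilon)}\bigr)$ times rather than once followed by a single Ramsey call. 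As written, your proposal proves a strictly weaker bound than the statement; for the purposes of this paper a citation of \cite{Erdosszemeredi1972unbalanced} is what is intended.
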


Lastly, we recall the following fact.

\begin{lemma}
\label{lm:digraphcolouring}
    Let $D$ be a digraph with maximum in-degree at most $\Delta$. Then there is a colouring of the vertices of $D$ using at most $2\Delta+1$ colours such that the associated colouring of the underlying graph of $D$ is a proper vertex colouring.
\end{lemma}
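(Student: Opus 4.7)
The plan is to show that the underlying graph of $D$ is $2\Delta$-degenerate, and then apply a greedy argument. Recall a graph is $k$-degenerate if every subgraph has a vertex of degree at most $k$, and such graphs are $(k+1)$-colourable.

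For the degeneracy claim, let $S \subseteq V(D)$ be any nonempty subset and consider the digraph $D[S]$ induced on $S$. Since each vertex of $D[S]$ inherits in-degree at most $\Delta$, the total number of arcs in $D[S]$ is at most $\Delta |S|$. Passing to the underlying graph $G[S]$, the sum of degrees is at most $2\Delta|S|$, so by averaging some vertex of $G[S]$ has degree at most $2\Delta$. Since $S$ was arbitrary, the underlying graph $G$ is $2\Delta$-degenerate.

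From this, the colouring is obtained by the standard degeneracy argument. Repeatedly remove a vertex of minimum degree (which is at most $2\Delta$) from the underlying graph, producing an ordering $v_1, v_2, \ldots, v_n$ in which each $v_i$ has at most $2\Delta$ neighbours among $v_{i+1}, \ldots, v_n$. Reversing this order and colouring greedily with colours from $\{1, 2, \ldots, 2\Delta+1\}$, each vertex has at most $2\Delta$ previously coloured neighbours, so some colour is always available. There is no real obstacle here; this is a routine application of the degeneracy bound, and its only role in the paper will be to schedule some iterative construction where each vertex's ``forbidden'' set has bounded size.
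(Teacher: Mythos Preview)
Your proof is correct and follows essentially the same approach as the paper: both establish that the underlying graph is $2\Delta$-degenerate and then colour greedily along a degeneracy ordering. The only cosmetic difference is that the paper finds the low-degree vertex by noting that some vertex has out-degree at most $\Delta$ (since average out-degree equals average in-degree $\le\Delta$), whereas you count arcs globally---these are the same averaging argument.
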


\begin{proof}
    We construct an ordering of the vertices of $D$ such that each vertex $v\in V(D)$ has at most $2\Delta$ in-neighbours and out-neighbours preceding it. Indeed, since the maximum in-degree is at most $\Delta$, there must be a vertex $v$ whose out-degree is at most $\Delta$. We put this $v$ last in our ordering and continue recursively. A greedy colouring now uses at most $2\Delta+1$ colours.
\end{proof}

We are now ready to prove Theorem \ref{thm:arbitraryconnectivity}.

\begin{proof}[Proof of Theorem \ref{thm:arbitraryconnectivity}]
Let $f\colon \mathbb{N}\rightarrow \mathbb{N}$ be a given function such that $n\leq f(n)\leq r(K_n)$ for all $n\in \mathbb{N}$. For a fixed, large $n$, we wish to construct a suitable graph $G_n$. We split into two cases, depending on the size of $f(n)$ (note that the cases cover overlapping ranges).

\medskip

\emph{Case 1:} $n\leq f(n) \leq 2^{n/8}$. 
Take the smallest $t$ such that $r(K_{t,t})> f(n)$. We note that by minimality, we have
\begin{equation*}
        r(K_{t-1,t-1})\leq f(n)< r(K_{t,t})\leq c_2r(K_{t-1,t-1})\leq c_2f(n),
\end{equation*}
where $c_2$ is the constant from Lemma \ref{lemma:completegraphsbound}. By Lemma \ref{lm:compbiplowerbound} we have $f(n) \geq r(K_{t-1,t-1})\geq 2^{(t-1)/2}$, so $t\leq 2\log f(n) +1$. As $f(n)\leq 2^{n/8}$, this implies $2t \leq n$. Let $G_n$ be the graph constructed as follows. Take a copy of $K_{t,t}$ and $n-2t$ isolated vertices. Within one of the parts of the copy of $K_{t,t}$ we pick $k$ vertices and add a complete bipartite graph between these $k$ vertices and the $n-2t$ isolated vertices (see Figure \ref{fig:case1construction}). It is easy to check that $|V(G_n)|=n$, $G_n$ is $k$-connected and that $r(G_n)\geq r(K_{t,t})> f(n)$.

    \begin{figure}[b]
        \centering
        
        \begin{subfigure}{0.45\textwidth}
        \centering
            \begin{tikzpicture} [scale=0.9]
	\begin{pgfonlayer}{nodelayer}
		\node [style=big ellipse] (0) at (-4.5, 0) {};
		\node [style=none] (1) at (-3, 0) {};
		\node [style=none] (2) at (-3, 0) {};
		\node [style=big ellipse] (3) at (-3, 0) {};
		\node [style=none] (4) at (0.25, 0) {$\vdots$};
		\node [style=vertex] (5) at (0.25, 1) {};
		\node [style=vertex] (6) at (0.25, 2) {};
		\node [style=vertex] (7) at (0.25, -1) {};
		\node [style=vertex] (8) at (0.25, -2) {};
		\node [style=vertex] (9) at (-3, -0.5) {};
		\node [style=vertex] (10) at (-3, -1) {};
		\node [style=none] (11) at (-3.75, 1.5) {\large $K_{t,t}$};

            \node[style=none] (12) at (-4.5,0.5) {};
            \node[style=none] (13) at (-4.5,-0.5) {};
            \node[style=none] (14) at (-3,0.5) {};
            \node[style=none] (15) at (-3,-0.5) {};
            \node [style=vertex] (16) at (-3, 0) {};
            \node [style=vertex] (17) at (-3, 0.5) {};
            \node [style=vertex] (18) at (-3, 1) {};
	\end{pgfonlayer}
	\begin{pgfonlayer}{edgelayer}
		\draw (9) to (6);
		\draw (9) to (5);
		%\draw (9) to (4);
		\draw (9) to (7);
		\draw (9) to (8);
		\draw (10) to (6);
		\draw (10) to (5);
		%\draw (10) to (4);
		\draw (10) to (7);
		\draw (10) to (8);
	\end{pgfonlayer}
        \begin{pgfonlayer}{background}
            %\draw [line width=3pt] (0) to (3);
            %\draw [line width=3pt] (0) to (14);
            %\draw [line width=3pt] (0) to (15);
            %\draw [line width=3pt] (12) to (3);
            \draw [line width=2.5pt] (12) to (14);
            \draw [line width=2.5pt] (12) to (15);
            %\draw [line width=3pt] (13) to (3);
            \draw [line width=2.5pt] (13) to (14);
            \draw [line width=2.5pt] (13) to (15);
            
        \end{pgfonlayer}
\end{tikzpicture}
            \caption{Case 1}
            \label{fig:case1construction}
        \end{subfigure}
        \begin{subfigure}{0.45\textwidth}
        \centering
            \begin{tikzpicture}[scale=0.9]
	\begin{pgfonlayer}{nodelayer}
		\node [style=big circle] (0) at (-2, 0) {};
		\node [style=none] (2) at (2, 0) {$\vdots$};
		\node [style=vertex] (3) at (-1.5, -0.25) {};
		\node [style=vertex] (6) at (2, 2) {};
		\node [style=vertex] (7) at (2, 1) {};
		\node [style=vertex] (8) at (2, -2) {};
		\node [style=vertex] (9) at (2, -1) {};
		\node [style=vertex] (10) at (-1.5, 0.25) {};
            \node [style=none] (11) at (-2, 1.5) {\large $K_t$};
            \node [style=vertex] (12) at (-2.5, 0.25) {};
            \node [style=vertex] (13) at (-2.5, -0.25) {};
            \node [style=vertex] (14) at (-2, 0) {};
            \node [style=vertex] (15) at (-2, 0.75) {};
            \node [style=vertex] (16) at (-2, -0.75) {};
	\end{pgfonlayer}
	\begin{pgfonlayer}{edgelayer}
		%\draw (3) to (2);
		\draw (3) to (6);
		\draw (3) to (7);
		\draw (3) to (8);
		\draw (3) to (9);
		\draw (10) to (8);
		\draw (10) to (9);
		%\draw (10) to (2);
		\draw (10) to (7);
		\draw (10) to (6);
	\end{pgfonlayer}
\end{tikzpicture}
            \caption{Case 2}
            \label{fig:case2construction}
        \end{subfigure}
        \caption{Constructions of $G_n$ for $k=2$}
        \label{fig:constructions}
    \end{figure}
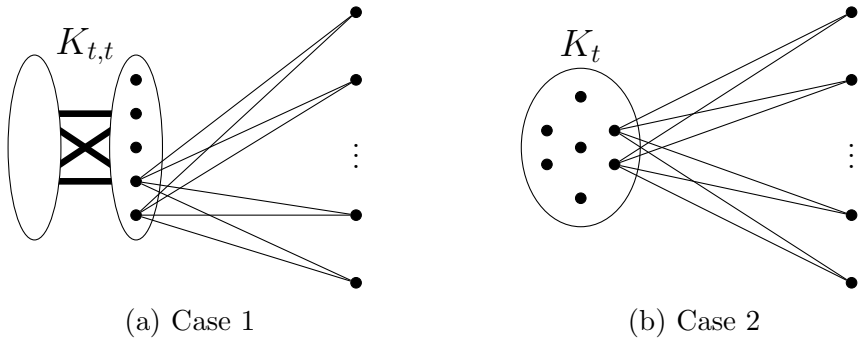

We now show that $r(G_n)\leq 2^{k}\cdot 40^k \cdot c_2f(n)$. Let $N=c_2f(n)$ and consider a red-blue colouring of the edges of the complete graph on $2^{k}\cdot 40^kN$ vertices. Start with some vertex $v_1$. Either its red or blue neighbourhood must contain at least half of the vertices of the graph. Let $R=B=\emptyset$. If the red neighbourhood of $v_1$ is larger, we add $v_1$ to $R$, otherwise we add it to $B$. We now consider only the vertices in this more popular neighbourhood of $v_1$. While $R$ and $B$ each contain less than $k$ vertices, we repeat this with vertices $v_2, v_3, \dotsc$, each time sorting $v_i$ into either $R$ or $B$ and dropping down to the larger coloured neighbourhood. We stop once one of $R$ and $B$ has size at least $k$; without loss of generality assume it is $B$. Note that this happens after at most $2k-1$ steps. We now consider the remaining vertices: if there is a vertex whose red neighbourhood is at least $\frac{1}{40}$ of the remaining graph, we add it to $R$ and drop down to its red neighbourhood. We repeat until either $|R|=k$ or we can no longer find a suitable vertex. 
    
If $|R|=k$, then there are at least $N$ vertices in $N_r(R)\cap N_b(B)$, so there is a monochromatic copy of $K_{t,t}$ which together with either $R$ or $B$ and another $n-t$ vertices from $N_r(R)\cap N_b(B)$ forms a monochromatic copy of $G_n$. Hence, we may assume that $|R|\leq k-1$ and that we end up with a set $U=N_r(R)\cap N_b(B)$ of size at least $40N$ in which every vertex has red degree strictly less than $\frac{1}{40}|U|$. Note that if there is a blue $K_{t,t}$ in $U$, then this $K_{t,t}$ together with $B$ and any other $n-t$ vertices in $U$ gives a blue $K_{t,t}$. Thus we may also assume that there is no blue $K_{t,t}$.

Let $S\subseteq U$ be an arbitrary subset of size $\frac{20}{19}t$. Let $U_0\subset U\setminus S$ be the set of vertices in $U\setminus S$ with at least $t$ blue edges to $S$.  Since there is no blue $K_{t,t,}$, the number of blue stars $K_{1,t}$ with all leaves in $S$ is at most $t\binom{|S|}{t}$; each element of $U_0$ is the centre of at least one such star, and so
$$|U_0|<t\binom{\frac{20}{19}t}{t}<2^{t/2}<N.$$
Now let $U_1=U\setminus(U_0\cup S)$.  Then $|U_1|\ge\frac{9}{10}|U|$ and every vertex of $U_1$ has at least $|S|/20$ red neighbours in $S$.  It follows that some vertex of $S$ has at least $|U_1|/20>|U|/40$ red neighbours in $U$, which contradicts our assumption on red degrees.

\medskip
    
\emph{Case 2:} $(\log n)^{k+2}n\leq f(n) \leq r(K_n)$.
Take the minimal $t$ such that $ r(K_t)> f(n)$. As before, we note that by the choice of $t$ we have $f(n)\geq r(K_{t-1})$, so $r(K_t)\leq c_1f(n)$ where $c_1$ is the constant from Lemma \ref{lemma:completegraphsbound}. Let $G_n$ be the graph constructed as follows. Take a copy of $K_t$ and an additional $n-t$ isolated vertices. Within $K_t$ choose a subset of $k$ vertices and add a complete bipartite graph between these $k$ vertices and the $n-t$ isolated vertices, see Figure \ref{fig:case2construction}. It is easy to check that $|V(G_n)|=n$, $G_n$ is $k$-connected and that $r(G_n)\geq r(K_t)> f(n)$.
    
Let $N = \ell_k c_1 f(n)$ where $\ell_k= 2\cdot  4^{k+2}\cdot k^{k+1}$. We wish to show that $r(G_n)\leq 2^{k}{\epsilon_k^{-k}}N$ where $\epsilon_k\le1/2$ is a small constant that we fix later.
    
Consider a red-blue colouring of the edges of the complete graph on $2^{k}{\epsilon_k^{-k}}N$ vertices. As before, we repeatedly pick vertices and drop down to the larger of their coloured neighbourhoods until we have $k$ vertices with edges of the same colour, say blue, to the remaining vertices. At this point, we again switch strategies and try to find vertices which have red edges to at least an $\epsilon_k$-fraction of the remaining graph. We repeat this until either we can no longer find such vertices or until we also have $k$ vertices with only red edges into the remaining set of vertices which we denote by $U$. In the latter case, we see that $|U|\geq N$, so there is a monochromatic copy of $K_t$ in $U$. This together with another $n-t$ vertices of $U$ and either the $k$ vertices that have only blue edges into $U$ or the $k$ vertices that have only red edges into $U$ gives a monochromatic copy of $G_n$. Hence, we may assume that we end up with a set $U$ of size at least $\frac{1}{\epsilon_k}N\geq 2N$ in which every vertex has red degree less than $\epsilon_k |U|$. Moreover, we can assume that $U$ does not contain a blue $K_t$.

Let $\epsilon_k$ be sufficiently small such that
\begin{equation*}
\frac{a}{\epsilon_k \log(1/\epsilon_k)}\geq 3k
\end{equation*}
where $a$ is the constant from Lemma \ref{lemma:unbalancedcolourings}. By the same lemma, we can find in $U$ a monochromatic clique of size at least 
\begin{equation*}
\frac{a}{\epsilon_k \log(1/\epsilon_k) }\log |U|\geq 3k \log(N) \geq 3k \log f(n) \geq 3k\cdot \frac{t-1}{2}\geq kt.
\end{equation*}
Call this clique $Q_1$. Since $|U\backslash V(Q_1)|\geq N$, we can repeat this and find another clique $Q_2$ of size $kt$. We greedily continue until we have $\frac{N}{kt}$ vertex-disjoint monochromatic copies of $K_{kt}$, which we label $Q_1 \dotsc, Q_{N/{kt}}$. Theses cliques cannot be blue, so they are all red. Consider the auxiliary digraph $D$ whose vertex set is $\{Q_1, \dotsc, Q_{N/kt}\}$ and whose arc set $A$ is given by the following. We have $(Q_i,Q_j)\in A$ if and only if there is some vertex $v\in V(Q_i)$ with at least $k$ red neighbours in $V(Q_j)$. Suppose there is some $Q_i$ with in-degree at least $k^kt^kn$. There are less than $k^kt^k$ subsets of $V(Q_i)$ of size $k$, so by pigeonhole principle there is some $k$-vertex subset of $Q_i$ that has at at least $n$ common red neighbours, giving us a red copy of $G_n$. Thus, we may assume that each $Q_i$ has in-degree less than $k^kt^kn$. 
Then we can use Lemma \ref{lm:digraphcolouring} to colour the $Q_i$ with at most $k^kt^kn$ colours such that there are no arcs between cliques in the same colour class.

Now some colour class has size at least $\frac{{N}/{kt}}{2k^kt^kn}\ge t$, say it contains $\{Q_1, \dotsc, Q_t\}$. Since there are no arcs between these cliques, any vertex in each of the $Q_i$ has at most $k$ red neighbours in any other $Q_j$. We now greedily embed a blue copy of $K_t$. Start with $Q_1$ and pick an arbitrary vertex $u_1$. Suppose $i\leq t$ and we have picked $\{u_1, \dotsc, u_{i-1}\}$ from cliques $\{Q_1, \dotsc, Q_{i-1}\}$ such that the vertices induce a blue $K_{t-1}$. We want to pick $v_i \in V(Q_i)$. Each of the $u_j$ has at most $k$ red neighbours in $Q_i$, so there are at least $kt-k(i-1)\geq k$ vertices in $Q_i$ with no red neighbours in $\{u_1, \dotsc, u_{i-1}\}$ and, in particular, there is some vertex $u_i$ with only blue neighbours in $\{u_1, \dotsc, u_{i-1}\}$. Reaching $i=t$, we have found a blue copy of $K_t$ and we are done.
\end{proof}

We remark here that for the range $2^{n/8}\leq f(n)\leq r(K_n)$, we can infer directly from Lemma \ref{lm:vertexdeletion} that the graph constructed in Case 2 has Ramsey number in $\Theta(f(n))$. However, as with the construction in \cite{pavez2022growthrates}, we see that this holds for a much larger range.

\section{Concluding remarks}
We have seen that for any reasonable function $f(n)$, we can find a sequence of $n$-vertex graphs $G_n$ such that $r(G_n)=\Theta(f(n))$ (where the implicit constants do not depend on $f$). It would be interesting to find out if we can, in fact, get much closer. Following Pavez-Signé et al.\ \cite{pavez2022growthrates}, let us consider the sets 
    \begin{align*}
        \mathcal{R}_n &=\{r(G)\colon |V(G)|=n\}, \text{ and} \\
        \mathcal{R}_n^c &=\{r(G)\colon |V(G)|=n \text{ and $G$ is connected}\}.
    \end{align*}

Clearly, we have $\mathcal{R}_n\subseteq [n,r(K_n)]$ and due to a result of Burr and Erd\H{o}s \cite{berdos1976trees} we know that $\mathcal{R}_n^c\subseteq [\lceil \frac{4}{3}n\rceil -1,r(K_n)]$. In order to see how close we can get to an arbitrary $f(n)$, we need to understand the `gaps' in $\mathcal{R}_n$ and $\mathcal{R}_n^c$. The following question is posed by Pavez-Signé et al.~\cite{pavez2022growthrates} and an affirmative answer would imply that it is possible to hit any growth rate {\em exactly}.

\begin{question}[\cite{pavez2022growthrates}, Question 13]
    Is there an $n_0\in \mathbb{N}$ such that for all $n\geq n_0$,
    \begin{equation*}
        \mathcal{R}_n = [n,r(K_n)] \;\; \text{and} \;\; \mathcal{R}_n^c=[\lceil \tfrac{4}{3}n\rceil -1,r(K_n)]?
    \end{equation*}
\end{question}
This is a very strong property, and the answer may well turn out to be negative.  
Pavez-Signé et al.~\cite{pavez2022growthrates} also raise the question of whether the multiplicative gaps in $\mathcal{R}_n^c$ are of size $1+o(1)$ as $n$ gets large.  It is natural to ask this even without the connectivity constraint, and we make the following conjecture.

\begin{conjecture}
\label{conj:ourconj}
    For all functions $f(n):\mathbb{N}\rightarrow \mathbb{N}$ with $n\leq f(n) \leq r(K_n)$ and for all $\epsilon>0$, there is a sequence of graphs $(G_n)_{n\in \mathbb{N}}$ satisfying $|V(G)|=n$ and 
    \begin{equation*}
        (1-\epsilon)f(n)\leq r(G_n) \leq (1+\epsilon )f(n)
    \end{equation*}
    for all sufficiently large $n$.
\end{conjecture}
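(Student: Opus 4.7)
Any proof of this conjecture must exhibit, for every growth rate $f(n)$ and every tolerance $\epsilon>0$, a construction whose Ramsey number can be tuned to within a factor $1+\epsilon$ of $f(n)$. My plan would be two-stage: first fix a ``base family'' $\{H^{(s,t)}_n\}$ of $n$-vertex graphs (indexed by one or two integer parameters) known to have Ramsey number within a constant factor of $f(n)$; then show that consecutive members of the family have Ramsey numbers differing by only a factor $1+o(1)$, so that choosing the parameters appropriately lands us in the $(1\pm\epsilon)f(n)$ window.

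For sub-exponential growth, the construction from Theorem~\ref{thm:pavez} --- a path attached to a small complete (multipartite) graph --- looks like a promising base. The intuition is that adding or removing a single vertex from the path changes $r$ by only $O(1)$ (by Gerencs\'er--Gy\'arf\'as, roughly $\tfrac{3}{2}$ in the path-dominated regime), which is $o(f(n))$ whenever $f(n)\to\infty$. By separately tuning the path length and the size of the attached dense piece, one should be able to move $r(H^{(s,t)}_n)$ in steps small enough to cover all target values up to an arbitrary multiplicative slack. Making this rigorous requires a careful analysis of how $r$ of the full hybrid graph splits between the ``path contribution'' and the ``dense contribution'', and in particular a quantitative stability statement saying the two additively combine up to lower-order error when one dominates.

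The exponential regime $2^{\Omega(n)}\le f(n)\le r(K_n)$ is the main obstacle. Here the base graph must contain something like $K_t$, and the finest available step is the jump between $r(K_t)$ and $r(K_{t+1})$, or between closely related graphs like $K_t$ with a few pendant vertices of prescribed degree, or $K_t$ minus a sparse subgraph. To achieve $(1+\epsilon)$-precision one needs a family of such graphs with consecutive Ramsey numbers differing by a factor $1+o(1)$; but even the ratio $r(K_{t+1})/r(K_t)$ is not known to be bounded, let alone tending to $1$, so this seems to require fundamentally new Ramsey stability estimates. A natural intermediate goal would be to sharpen Lemma~\ref{lm:vertexdeletion} so that the loss when adding a single vertex to a suitably structured dense graph is $1+o(1)$ rather than $O(d^{-1}\log(1/d))$; combined with probabilistic or quasi-random constructions that interpolate between the classical Ramsey graphs, this might eventually suffice. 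Absent such a tool, the exponential range is where I expect any proof attempt to break down, and it is plausible that serious progress on the conjecture hinges on progress towards the still-open problem of showing that $r(K_t)^{1/t}$ converges.
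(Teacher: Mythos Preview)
This statement is presented in the paper as an open \emph{conjecture}, not a theorem; the paper gives no proof, only a short remark in the concluding section suggesting a possible angle of attack. Your submission is likewise not a proof but an outline of strategies together with an honest diagnosis of where they stall, so there is no correctness to adjudicate in the usual sense.

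That said, the speculative approaches differ. You propose interpolating via the path-plus-dense-piece hybrids behind Theorem~\ref{thm:pavez} in the sub-exponential range, and via a sharpened vertex-deletion estimate (an improved Lemma~\ref{lm:vertexdeletion}) in the dense range. The paper instead floats \emph{edge} deletion: it cites Wigderson's conjecture that removing a single edge changes $r(G)$ by at most a constant factor, and speculates that for dense $G$ there is always some edge whose removal costs only a $1+o(1)$ factor. Edge deletion gives finer granularity than vertex deletion, since far more intermediate graphs are available, but both routes require Ramsey stability estimates under small perturbations that are currently out of reach. Your identification of the exponential regime --- and in particular the unknown behaviour of $r(K_{t+1})/r(K_t)$, equivalently whether $r(K_t)^{1/t}$ converges --- as the essential obstruction is accurate and aligns with the implicit barrier in the paper's own discussion.
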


One way to approach this might be through edge deletion.  Wigderson \cite{wigderson2022vtxdeletion} has conjectured that deleting an edge can change the Ramsey number by at most a constant factor.  It seems possible that, at least for dense graphs $G$, there might always some edge $e$ such that deleting $e$ changes the Ramsey number by at most a $1+o(1)$ factor (as $|G|$ gets large).

\bibliographystyle{abbrv}
\bibliography{myreferences}

\end{document}